\newcommand{\abs}[1]{\left\vert#1\right\vert}
 \newtheorem{theorem}{Theorem}
 \newtheorem{lemma}{Lemma}
\begin{document}

\title{Applications of a theorem by Ky Fan in the theory of weighted Laplacian graph energy}
\author{Reza Sharafdini$^\ast$, Alireza Ataei,  Habibeh Panahbar\\
\small{Department of Mathematics, Persian Gulf University, Bushehr 75169-13817, Iran}}
\date{}
\maketitle
\begin{abstract}
  The energy of a graph $G$ is equal to the sum of the absolute values of the eigenvalues of $G$ ,  which in turn is equal to the sum of the singular values of the adjacency matrix of $G$.  Let $X$,  $Y$ and $Z$ be matrices,  such that $X+Y= Z$.  The Ky Fan theorem establishes an  inequality between the sum of the singular values of $Z$ and the sum of the sum of the singular values of $X$ and $Y$.  This theorem is applied in the theory of graph energy,  resulting in several new inequalities, as well as new proofs of some earlier known inequalities.\\[1mm]
\textbf{Key words:}Ky Fan theorem;   Mean deviation;   Vertex weight; Eigenvalue;   Energy (of graph);   Singular value (of matrix);\\
\textbf{AMS Subject Classification:} 05C50; 05C90; 15A18; 15A42; 92E10.
\end{abstract}
\section{Introduction}
In this paper, we are concerned with simple graphs. Let $G = (V,E)$ be a simple graph, with nonempty vertex set $V= \{v_1,\ldots,v_n\}$ and edge set $E= \{e_1,\ldots,e_m\}$. That is to say, $G$ is a simple $(n,m)$-graph.
Let $\omega$ be a vertex weight of $G$, i.e., $\omega$ is a function from the set of vertices of $G$ to the set of positive real numbers. $G$ is called $\omega$-regular if for any $u,v\in V(G)$, $\omega(u)=\omega(v)$.
Observe that a well-know vertex weight of a graph is the vertex degree weight assigning to each vertex its degree. Let us denote it by $deg$.

The diagonal matrix of order $n$ whose $(i,i)$-entry is $\omega(v_i)$ is called the diagonal vertex weight matrix of $G$ with respect to  $\omega$   and is denoted by $D_\omega(G)$, i.e., $D_\omega(G)=\mathrm{diag}(\omega(v_i),\ldots,\omega(v_n))$ . The adjacency matrix $A(G)= (a_{ij})$ of $G$ is a $(0,1)$-matrix defined by $a_{ij}= 1$ if and only if the vertices $v_i$ and $v_j$ are adjacent. Then the matrices $L_{deg}(G)=D_{deg}(G)-A(G)$
and $L^\dag_{deg}(G)=A(G)+D_{deg}(G)$ are called Laplacian  and  signless Laplacian  matrix of $G$, respectively (see \cite{Gro-Mer1994}, \cite{Gro-Mer-sun1990}, \cite{Mer1995}, \cite{Mer1994}, \cite{Mohar1991} and \cite{Mohar2004}). Let us generalize these matrices for arbitrary vertex weighted graphs. Let $G$ be a simple graph with the vertex weight  $\omega$. Then we shall call the matrices $L_\omega(G)=D_\omega(G)-A(G)$
and $L^\dag _\omega(G)=A(G)+D_\omega(G)$  the weighted  Laplacian  and the weighted signless   Laplacian matrix of $G$ with respect to the vertex weight $\omega$.

Let  $X = \{x_1 ,x_2,...,x_n\}$ be a data set of real numbers. The \emph{mean absolute
deviation} (often called the mean deviation) $\mathrm{MD}(X)$ and variance $\mathrm{Var}(X)$ of $X$ is defined as
\begin{linenomath*}
\begin{equation*}
\mathrm{MD}(X)= \dfrac{1}{n}\sum_{i=1}^n|x_i-\overline{x}|,\quad\quad \mathrm{Var}(X)= \dfrac{1}{n}\sum_{i=1}^n(x_i-\overline{x})^2
\end{equation*}
\end{linenomath*}
where $\overline{x}=\dfrac{\sum_{i=1}^{n}x_i}{n}$ is the arithmetic mean of the distribution.
Note that an easy application of the Cauchy-Schwarz inequality gives that the mean deviation is a
lower bound on the standard deviation (see \cite{CaversThesis}).
\begin{linenomath*} \begin{equation}\label{MDVAR}
  \mathrm{MD}(X)\leq \sqrt{\mathrm{Var}(X)}.
\end{equation}\end{linenomath*}
The mean deviation and variance of $G$ with respect to $\omega$, denoted by $\mathrm{MD}_\omega(G)$ and  $\mathrm{Var}_\omega(G)$, respectively, is defined as
\begin{linenomath*} \begin{equation*}
\mathrm{MD}_\omega(G)=\mathrm{MD}(\omega(v_1),\ldots,\omega(v_n)),\quad \quad \mathrm{Var}_\omega(G)=\mathrm{Var}(\omega(v_1),\ldots,\omega(v_n)).
\end{equation*} \end{linenomath*}
It follows from  Eq. \eqref{MDVAR} that $\mathrm{MD}_\omega(G)\leq \sqrt{\mathrm{Var}_\omega(G)}$.
It is worth mentioning that $\mathrm{Var}_{\deg}(G)$ is well-investigated graph invariant (see \cite{Bell} and \cite{Gut-Paule2002}).
Let $\lambda_1, \lambda_2,\ldots , \lambda_n$ be eigenvalues of the adjacency matrix $A(G)$ of graph $G$. It is  known that $\sum_{i=1}^{n}\lambda_i=0$. The notion of the energy $E(G)$ of an $(n,m)$-graph $G$ was introduced by Gutman in connection with the $\pi$-molecular energy (see \cite{I.Gutman}, \cite{Gutman}, \cite{Gutm-zar} and \cite{Ind-Vijay}). It is defined as
\begin{linenomath*} \begin{equation*}E (G)=\sum_{i=1}^n \vert\lambda_i\vert=n \mathrm{MD}(\lambda_1, \lambda_2,\ldots , \lambda_n) .\end{equation*} \end{linenomath*}
For details of the theory of graph energy see \cite{Gutman}, \cite{Gut-Pol} and \cite{LiShiGut}.

Let $M\in\mathbb{C}^{n\times n}$ be Hermitian with singular values $s_i(M), i=1,2,\ldots,n$. If $\lambda_i(M), i=1,2,\ldots,n$ are eigenvalues of $M$, then $s_i(M)=|\lambda_i(M)|$, $i = 1, 2,\ldots, n$. Getting motivated from this fact,  Nikiforov established the concept of matrix energy by analogy with graph energy \cite{V.Nik}.
Let $M\in\mathbb{C}^{n\times n}$ with singular values $s_i(M), i=1,2,\ldots,n$. Then the energy of $M$, denoted by
$E(M)$, is defined as $s_1(M)+s_2(M)+\ldots +s_n(M)$. Consequently, if $M\in\mathbb{C}^{n\times n}$ is Hermitian with eigenvalues $\lambda_1(M),\lambda_2(M),\ldots,\lambda_n(M)$, we have
\begin{linenomath*} \begin{equation*}
E(M)=\sum_{i=1}^n|\lambda_i(M)|.
\end{equation*} \end{linenomath*}

Let $n\ge\mu_1, \mu_2,\ldots , \mu_n=0$ be eigenvalues of Laplacian matrix $L(G)$ of graph $G$.
It is  known that $\sum_{i=1}^{n}\mu_i=2m$.
Gutman and Zhou defined the Laplacian energy of an $(n,m)$-graph $G$ for the first time (see \cite{Gut-Zhou-Lap} ) as
\begin{linenomath*} \begin{equation*} LE(G)=\sum_{i=1}^n\Big\vert\mu_i-\frac{2m}{n}\Big\vert=n\mathrm{MD}(\mu_1,\ldots, \mu_n).\end{equation*} \end{linenomath*}
Numerous results on the Laplacian energy have been obtained, see for instance
\cite{T.Ale}, \cite{Dasa.Moj.Gut}, \cite{N.N.M}, \cite{I.N.M}, \cite{Romatch2009}, \cite{KeyFanLAA2010} and \cite{B.I}.
Note that in the definition of Laplacian energy $\dfrac{2m}{n}$ is the average vertex degree of $G$. This motivates us to extend their definition to the graphs equipped with an arbitrary vertex weight. Let $G$ be
 a graph with the vertex set $V= \{v_1,\ldots,v_n\}$ and with an arbitrary vertex weight $\omega$.
Let $\mu_1, \mu_2,\ldots , \mu_n$ be eigenvalues of the weighted Laplacian matrix $L_\omega(G)$ of graph $G$ with respect to the vertex weight $\omega$. Then we propose the weighted  Laplacian energy $LE_\omega(G)$ of $G$ with respect to the vertex weight $\omega$ as
\begin{linenomath*} \begin{equation}\label{eqn:wLE}
 LE_\omega (G)=\sum_{i=1}^n\big|\mu_i - \overline{\omega}\big|=n\mathrm{MD}(\mu_1,\ldots, \mu_n),
\end{equation}\end{linenomath*}
where
\begin{linenomath*} \begin{equation*}\overline{\omega}=\dfrac{\sum_{i=1}^{n}\omega(v_i)}{n} \quad \mbox{and} \quad \sum_{i=1}^{n}\mu_i=n\overline{\omega}.\end{equation*} \end{linenomath*}
Note that $LE_{deg}(G)=LE(G)$.

Let $I_s$ be the unit matrix of order $s$. For the considerations that follow it will be necessary to note
that instead via  Eq. \eqref{eqn:wLE}, the  weighted Laplacian energy can be expressed also as
\begin{linenomath*} \begin{equation}\label{eqn:Lapp}
LE_\omega(G)=E(L_\omega(G)-\overline{\omega}I_n).
\end{equation}
\end{linenomath*}
The following results are already known.

The next lemma is known for the vertex degree weight \cite{Cvet}; Its proof for an arbitrary vertex weight is done in a similar fashion.

\begin{lemma}\label{lem:Bi}

Let $G$ be a bipartite graphs with n vertices and with a vertex weight $\omega$.
Then $L_\omega(G)$ and $L_\omega^\dag(G)$ are similar.
\end{lemma}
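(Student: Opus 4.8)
The plan is to produce an explicit similarity transformation realized by a diagonal signature matrix. Since $G$ is bipartite, its vertex set splits as $V = V_1 \cup V_2$ with every edge having one endpoint in $V_1$ and the other in $V_2$. Using this partition I would define the diagonal matrix $S = \mathrm{diag}(s_1,\ldots,s_n)$ by setting $s_i = +1$ when $v_i \in V_1$ and $s_i = -1$ when $v_i \in V_2$. The key structural features of $S$ are that it is diagonal and that $S^2 = I_n$, so $S$ is invertible with $S^{-1} = S$; this is what will let the conjugation below be an honest similarity.

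The heart of the argument is to compute how $S$ acts on the two constituent matrices of $L_\omega(G) = D_\omega(G) - A(G)$. First, since $S$ and $D_\omega(G)$ are both diagonal they commute, so $S\,D_\omega(G)\,S = D_\omega(G)\,S^2 = D_\omega(G)$. Second, I would examine the entry $(S A(G) S)_{ij} = s_i\,a_{ij}\,s_j$. A nonzero $a_{ij}$ occurs exactly when $v_i$ and $v_j$ are adjacent, which by bipartiteness forces $v_i$ and $v_j$ to lie in opposite parts, hence $s_i s_j = -1$. Therefore $(S A(G) S)_{ij} = -a_{ij}$ for every pair, giving $S\,A(G)\,S = -A(G)$.

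Combining these two computations yields
\begin{linenomath*}
\begin{equation*}
S\,L_\omega(G)\,S^{-1} = S\bigl(D_\omega(G) - A(G)\bigr)S = D_\omega(G) + A(G) = L_\omega^\dag(G),
\end{equation*}
\end{linenomath*}
which is precisely the assertion that $L_\omega(G)$ and $L_\omega^\dag(G)$ are similar. I would remark that the argument nowhere uses positivity or any special property of $\omega$ beyond the fact that $D_\omega(G)$ is diagonal, which is why the proof for an arbitrary vertex weight runs identically to the classical degree-weight case.

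The only genuinely substantive step is the verification that $S\,A(G)\,S = -A(G)$, and the single fact it rests on is that bipartiteness guarantees $s_i s_j = -1$ on every edge. I expect this to be the main (and essentially the sole) obstacle; the remaining manipulations with the diagonal matrix $D_\omega(G)$ and the involution property $S^{-1} = S$ are routine. One should also note in passing that the well-definedness of $S$ depends on a fixed choice of bipartition, but since any bipartite graph admits such a partition, this poses no difficulty.
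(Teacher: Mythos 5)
Your proof is correct, and the signature-matrix conjugation $S L_\omega(G) S^{-1} = L_\omega^\dag(G)$ with $S = \mathrm{diag}(\pm 1)$ determined by the bipartition is exactly the standard argument the paper is alluding to when it says the degree-weight proof carries over verbatim to an arbitrary vertex weight. The paper itself gives no written proof, only a citation, so your write-out fills that gap in precisely the intended way.
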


\begin{lemma}\rm{\cite[Section 7.1, Ex. 2]{Horn-Joh}}\label{le:Key}
If $A =(a_{ij})_{i,j=1}^{n}$ is a positive semi-definite matrix and $a_{ii}=0$ for some $i$, then $a_{ij} =0=a_{ji}$, $j=1,\ldots,n$.
\end{lemma}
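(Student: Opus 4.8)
The plan is to localize the problem to the two indices $i$ and $j$, using the elementary fact that every principal submatrix of a positive semi-definite matrix is again positive semi-definite. Since $A$ is positive semi-definite it is in particular Hermitian, so $a_{ji}=\overline{a_{ij}}$; thus it suffices to establish $a_{ij}=0$, and the conclusion $a_{ji}=0$ will then follow automatically. The case $j=i$ is trivial, so I fix $j\neq i$.

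Next I would pass to the $2\times 2$ principal submatrix of $A$ obtained by retaining rows and columns $i$ and $j$, namely
\[
B=\begin{pmatrix} a_{ii} & a_{ij}\\ a_{ji} & a_{jj}\end{pmatrix}=\begin{pmatrix} 0 & a_{ij}\\ a_{ji} & a_{jj}\end{pmatrix}.
\]
Being a principal submatrix of a positive semi-definite matrix, $B$ is itself positive semi-definite, so all of its principal minors are nonnegative; in particular $\det B\ge 0$. But $\det B = a_{ii}a_{jj}-a_{ij}a_{ji}=-a_{ij}\overline{a_{ij}}=-|a_{ij}|^2$, whence $-|a_{ij}|^2\ge 0$. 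This forces $a_{ij}=0$, and then $a_{ji}=\overline{a_{ij}}=0$. Since $j$ was arbitrary, the claim follows for every index.

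I expect no serious obstacle, as the lemma reduces to a two-dimensional computation. The one point requiring a little care is the complex-Hermitian bookkeeping: one must invoke $a_{ji}=\overline{a_{ij}}$ so that $a_{ij}a_{ji}=|a_{ij}|^2$ is a genuine nonnegative real number, rather than treating $a_{ij}$ and $a_{ji}$ as independent quantities. If one prefers to avoid the submatrix language, the same fact can be extracted directly from the defining inequality $x^{*}Ax\ge 0$ by testing it on vectors of the form $x=\alpha e_i+e_j$ and letting $|\alpha|\to\infty$, with the phase of $\alpha$ chosen so that the cross term $2\,\mathrm{Re}(\overline{\alpha}\,a_{ij})$ is negative; positivity of the quadratic form then fails unless $a_{ij}=0$. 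Both routes collapse to the same elementary estimate.
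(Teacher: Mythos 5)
Your proof is correct. The paper does not actually prove this lemma --- it is stated with only a citation to the exercise in Horn and Johnson --- and your $2\times 2$ principal submatrix argument (nonnegativity of $\det\begin{pmatrix}0 & a_{ij}\\ \overline{a_{ij}} & a_{jj}\end{pmatrix} = -|a_{ij}|^2$, with the Hermitian identity $a_{ji}=\overline{a_{ij}}$ handled correctly) is exactly the standard solution that the cited exercise intends; the alternative route via $x=\alpha e_i+e_j$ is an equally valid unwinding of the same estimate.
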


Theorem \ref{thm:KyFan}, supporting the concept of matrix energy proposed by Nikiforov, was first obtained by Ky Fan \cite{k.Fan} using a variational principle. It also appears in Gohberg and Krein \cite{Gohb-Ker} and in Horn and Johnson \cite{Horn-Joh}. No equality case is discussed in these references. Thompson \cite{R.C,R.C.Thom} employs polar decomposition theorem and
inequalities due to Fan and Hoffman \cite{K.A.J} to  obtain its equality case. Day and So \cite{Day-So-SVI-GEC} give the details of a proof for the inequality and the case of equality.
\begin{theorem}\label{thm:KyFan}
Let $A$ and $B$ be two complex square matrices of size $n$ ($A,B\in \mathbb{C}^{n\times n}$) and let $C = A + B$. Then
\begin{linenomath*} \begin{equation}\label{eqn:KeyFan}
E(C)\leq E (A)+E (B)  .
\end{equation}\end{linenomath*}
Moreover equality holds if and only if there exists an unitary matrix $P$ such that $PA$ and $PB$ are both positive semi-definite matrices  .
\end{theorem}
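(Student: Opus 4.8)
The plan is to deduce both assertions from the variational (dual) description of the energy as a trace norm,
\[
E(M)=\max_{U\ \mathrm{unitary}}\ \mathrm{Re}\,\mathrm{tr}(UM),\qquad M\in\mathbb{C}^{n\times n}.
\]
To establish this identity I would start from a singular value decomposition $M=W\Sigma V^{\ast}$, with $W,V$ unitary and $\Sigma=\mathrm{diag}(s_1(M),\dots,s_n(M))$. For any unitary $U$, writing $Q=V^{\ast}UW$ (again unitary) gives $\mathrm{tr}(UM)=\mathrm{tr}(Q\Sigma)=\sum_i Q_{ii}\,s_i(M)$, so that $\mathrm{Re}\,\mathrm{tr}(UM)\le\sum_i|Q_{ii}|\,s_i(M)\le\sum_i s_i(M)=E(M)$, using $|Q_{ii}|\le 1$. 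The bound is attained at $U=VW^{\ast}$, where $Q=I_n$, which proves the identity. The inequality \eqref{eqn:KeyFan} is then immediate: choosing a unitary $P$ that attains $E(C)=\mathrm{Re}\,\mathrm{tr}(PC)$ and using linearity of the trace,
\[
E(C)=\mathrm{Re}\,\mathrm{tr}(PA)+\mathrm{Re}\,\mathrm{tr}(PB)\le E(A)+E(B).
\]

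The equality analysis rests on the auxiliary claim that, for a unitary $P$, one has $\mathrm{Re}\,\mathrm{tr}(PM)=E(M)$ if and only if $PM$ is positive semi-definite. One direction is trivial: if $N:=PM\succeq 0$ then $\mathrm{tr}(N)=\sum_i\lambda_i(N)=\sum_i s_i(N)=E(N)=E(M)$ is real. For the converse I would invoke the polar decomposition $N=RH$ with $R$ unitary and $H=(N^{\ast}N)^{1/2}\succeq 0$, so that $E(N)=\mathrm{tr}(H)$. Assuming $\mathrm{Re}\,\mathrm{tr}(N)=E(N)$, consider the positive semi-definite matrix $G=(I_n-R)H(I_n-R)^{\ast}$. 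A short computation using $R^{\ast}R=I_n$ gives $\mathrm{tr}(G)=2\,\mathrm{tr}(H)-2\,\mathrm{Re}\,\mathrm{tr}(RH)=2E(N)-2\,\mathrm{Re}\,\mathrm{tr}(N)=0$. Hence every diagonal entry of the positive semi-definite matrix $G$ vanishes, and Lemma~\ref{le:Key} forces $G=0$; writing $G=BB^{\ast}$ with $B=(I_n-R)H^{1/2}$ yields $(I_n-R)H=0$, i.e.\ $N=RH=H\succeq 0$.

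With this characterization the equality case follows by the same bookkeeping as the inequality. If $E(C)=E(A)+E(B)$, pick a unitary $P$ attaining $E(C)=\mathrm{Re}\,\mathrm{tr}(PC)$; then $\mathrm{Re}\,\mathrm{tr}(PA)+\mathrm{Re}\,\mathrm{tr}(PB)=E(A)+E(B)$ while $\mathrm{Re}\,\mathrm{tr}(PA)\le E(A)$ and $\mathrm{Re}\,\mathrm{tr}(PB)\le E(B)$, which forces equality in each term; the claim then makes $PA$ and $PB$ both positive semi-definite. Conversely, if some unitary $P$ makes $PA,PB\succeq 0$, then $\mathrm{Re}\,\mathrm{tr}(PA)=E(A)$ and $\mathrm{Re}\,\mathrm{tr}(PB)=E(B)$, whence $E(C)\ge\mathrm{Re}\,\mathrm{tr}(PC)=E(A)+E(B)\ge E(C)$ and equality holds throughout.

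The inequality itself is essentially a one-line consequence of the variational identity, so I expect the genuine work to lie entirely in the equality characterization, specifically in the nontrivial direction of the auxiliary claim, where the polar decomposition must be combined with the positivity argument. The delicate point is verifying that the constructed matrix $G$ is positive semi-definite with zero trace so that Lemma~\ref{le:Key} applies; organizing this cleanly, and handling the partial-isometry case when $N$ is singular (where $R$ must be extended to a genuine unitary), is where most of the care is needed.
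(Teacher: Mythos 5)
Your proof is correct, but note that the paper itself does not prove Theorem~\ref{thm:KyFan} at all: it is quoted from the literature, with the inequality attributed to Ky Fan (via a variational principle) and the equality case to Thompson \cite{R.C,R.C.Thom} (via polar decomposition and the Fan--Hoffman inequalities) and to Day and So \cite{Day-So-SVI-GEC}. What you supply is a self-contained argument along essentially those classical lines: the trace-norm duality
\[
E(M)=\max_{U\ \mathrm{unitary}}\ \mathrm{Re}\,\mathrm{tr}(UM)
\]
gives the subadditivity in one line, and your auxiliary claim --- that a unitary $P$ attains $\mathrm{Re}\,\mathrm{tr}(PM)=E(M)$ exactly when $PM\succeq 0$ --- is proved correctly via the polar decomposition $N=RH$ and the observation that $G=(I_n-R)H(I_n-R)^{\ast}$ is positive semi-definite with $\mathrm{tr}(G)=2E(N)-2\,\mathrm{Re}\,\mathrm{tr}(N)=0$, forcing $(I_n-R)H=0$ and hence $N=H$. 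All steps check out, including the existence of a genuinely unitary $R$ for singular $N$ (which is exactly what Theorem~\ref{thm:PDT} guarantees), and the bookkeeping that converts termwise equality of $\mathrm{Re}\,\mathrm{tr}(PA)$ and $\mathrm{Re}\,\mathrm{tr}(PB)$ with $E(A)$ and $E(B)$ into positive semi-definiteness of $PA$ and $PB$. Compared with the references the paper leans on, your route has the advantage of deriving both the inequality and its equality case from a single variational identity, whereas Thompson's argument reaches the equality case through the Fan--Hoffman metric inequalities; the two are close in spirit but yours is the more economical and self-contained presentation.
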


Let $A$ be a complex matrix of size $n$ ($A\in \mathbb{C}^{n\times n}$). Let us denote the Hermitian adjoint of $A$ by $A^\ast$.  Then both $A^\ast A$ and $AA^\ast$ are Hermitian positive semi-definite matrices with the same nonzero eigenvalues. In particular $A^\ast A$ and $AA^\ast$ are diagonalizable with real non-negative eigenvalues. Then by spectral theorem for complex matrices we may define $\abs{A}:=({A^\ast A})^{1/2}$.
Here we present the following version of the polar decomposition theorem \cite{Horn-Joh}.

\begin{theorem}\label{thm:PDT}
For $A\in \mathbb{C}^{n\times n}$, there exist positive semi-definite matrices $X,Y \in \mathbb{C}^{n\times n}$ and unitary matrices $P,F \in \mathbb{C}^{n\times n}$ such that $A =PX = YF$. Moreover, the matrices $X,Y$ are unique,
$X = |A|$, $Y =|A^\ast|$. The matrices $P$ and $F$ are uniquely determined if and only if $A$ is non-singular.
\end{theorem}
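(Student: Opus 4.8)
The plan is to derive everything from the singular value decomposition together with the uniqueness of the positive semi-definite square root, both of which follow from the spectral theorem already invoked above. First I would write the SVD $A = U\Sigma V^\ast$, where $U,V \in \mathbb{C}^{n\times n}$ are unitary and $\Sigma = \mathrm{diag}(s_1,\ldots,s_n)$ collects the non-negative singular values of $A$. This is obtained by applying the spectral theorem to the Hermitian positive semi-definite matrix $A^\ast A$ to get an orthonormal eigenbasis, reading off $\Sigma$ as the square roots of its eigenvalues, and completing the corresponding vectors to unitary matrices.

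For existence, I would simply regroup the factors. Inserting $V^\ast V = I_n$ gives $A = (UV^\ast)(V\Sigma V^\ast)$, so setting $P := UV^\ast$ and $X := V\Sigma V^\ast$ exhibits $A = PX$ with $P$ unitary and $X$ positive semi-definite. Likewise $A = (U\Sigma U^\ast)(UV^\ast)$ yields $A = YF$ with $Y := U\Sigma U^\ast$ positive semi-definite and $F := UV^\ast$ unitary. To identify $X$ and $Y$, I would compute $A^\ast A = V\Sigma^2 V^\ast$ and $AA^\ast = U\Sigma^2 U^\ast$; taking positive semi-definite square roots gives $X = (A^\ast A)^{1/2} = \abs{A}$ and $Y = (AA^\ast)^{1/2} = \abs{A^\ast}$, as required.

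For uniqueness of $X$ (and symmetrically $Y$): if $A = PX$ with $P$ unitary and $X$ positive semi-definite, then $A^\ast A = X P^\ast P X = X^2$, and since a positive semi-definite matrix has a unique positive semi-definite square root (a consequence of the spectral theorem), $X$ must equal $\abs{A}$. The factor $P$ is pinned down by $PX = A$ only on the range of $X$; explicitly, when $A$ is non-singular, $X = \abs{A}$ is invertible and $P = AX^{-1}$ is forced, giving uniqueness. The main subtlety, and the step I expect to require the most care, is the converse: when $A$ is singular, $X$ has a non-trivial kernel $K$, and modifying $P$ by any unitary that acts nontrivially on $K$ (the orthogonal complement of the range of $X$) leaves the product $PX$ unchanged, so $P$ is not unique. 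Here one must check that such modifications genuinely exist, which they do since $\dim K \geq 1$, and that they preserve unitarity. The argument for $F$ is identical.
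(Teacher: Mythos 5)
The paper does not actually prove this statement: Theorem~\ref{thm:PDT} is quoted directly from Horn and Johnson \cite{Horn-Joh} as a known result, so there is no in-paper argument to compare yours against. Your proposal is a correct and entirely standard derivation of the polar decomposition from the singular value decomposition: the regroupings $A=(UV^\ast)(V\Sigma V^\ast)=(U\Sigma U^\ast)(UV^\ast)$ give existence, the computations $A^\ast A=X^2$ and $AA^\ast=Y^2$ together with uniqueness of the positive semi-definite square root give $X=|A|$ and $Y=|A^\ast|$, and the invertibility of $X$ forces $P=AX^{-1}$ in the non-singular case. The only place where your wording is slightly loose is the singular case: the perturbation of $P$ must be of the form $P'=PU$ where $U$ is unitary, restricts to the identity on $\ker(X)^\perp=\mathrm{range}(X)$, and maps $\ker(X)$ to itself; then $UX=X$ gives $P'X=PX$, and taking $U$ to act as a non-trivial phase on the at least one-dimensional kernel shows $P'\neq P$. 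You flag exactly this as the point needing care, and the check goes through, so the argument is complete.
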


There is a great deal of analogy between the properties of $E(G)$ and $LE_{\omega}(G)$, but also some significant
differences. These similarities and dissimilarities has been investigated \cite{Sh-Pa-weiLa}. In this paper we apply Theorem \ref{thm:KyFan} in the theory of graph energy,  resulting in several new inequalities, as well as new proofs of some earlier known inequalities. It is worth mentioning that the idea of this paper inspired from \cite{Romatch2009} and \cite{KeyFanLAA2010}; Our proofs are based on those of these references.

\section{Graphs $G$ for which $LE_\omega(G)=E(G)+E(D_\omega(G)-\overline{\omega}I_n)$}

In the case of vertex degree weight, the inequality in the following theorem was proved in \cite{KeyFanLAA2010}, whereas the equality in  Eq. \eqref{eqn:Lap} was investigated in \cite{Romatch2009}. Based on their proof, we generalize their results for a connected graph with an arbitrary vertex weight.

\begin{theorem}\label{thm:MD}
Let $G$ be a connected graph with $n$ vertices and with a vertex weight $\omega$. Then
\begin{linenomath*} \begin{equation}\label{eqn:Lap}
 LE_\omega(G)\leq n\mathrm{MD}_\omega(G)+E(G).
\end{equation}\end{linenomath*}
Moreover the equality in  Eq. \eqref{eqn:Lap} holds if and if $G$ is  $\omega$-regular.
\end{theorem}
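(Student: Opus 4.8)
The plan is to realize the three quantities in Eq.~\eqref{eqn:Lap} as matrix energies and to invoke the Ky Fan inequality (Theorem~\ref{thm:KyFan}). Writing $A=A(G)$ and using $L_\omega(G)=D_\omega(G)-A$, I would start from the decomposition
\begin{linenomath*}
\begin{equation*}
L_\omega(G)-\overline{\omega}\,I_n=\bigl(D_\omega(G)-\overline{\omega}\,I_n\bigr)+\bigl(-A\bigr),
\end{equation*}
\end{linenomath*}
and apply Theorem~\ref{thm:KyFan} with $C=L_\omega(G)-\overline{\omega}I_n$, $A'=D_\omega(G)-\overline{\omega}I_n$ and $B'=-A$. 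By Eq.~\eqref{eqn:Lapp} the left-hand side is $E(C)=LE_\omega(G)$. Since $A'$ is diagonal with entries $d_i=\omega(v_i)-\overline{\omega}$, its singular values are $|d_i|$, so $E(A')=\sum_{i=1}^n|d_i|=n\mathrm{MD}_\omega(G)$; and since $A$ is Hermitian, $-A$ has eigenvalues $-\lambda_i$, whence $E(B')=\sum_{i=1}^n|\lambda_i|=E(G)$. The Ky Fan inequality $E(C)\le E(A')+E(B')$ then reads exactly as Eq.~\eqref{eqn:Lap}.

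For the equality, the direction ``$\omega$-regular $\Rightarrow$ equality'' is immediate: if $\omega$ is constant then $\omega(v_i)=\overline{\omega}$ for every $i$, so $A'=0$, $n\mathrm{MD}_\omega(G)=0$, and $LE_\omega(G)=E(-A)=E(G)$, giving equality in Eq.~\eqref{eqn:Lap}.

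The substantive direction is ``equality $\Rightarrow$ $\omega$-regular'', and this is where I expect the real work to lie. Assuming equality, Theorem~\ref{thm:KyFan} furnishes a unitary $P$ with $PA'$ and $PB'=-PA$ both positive semi-definite. First I would pin down $P$ on the support of $A'$: from $A'=P^\ast(PA')$ with $PA'$ positive semi-definite, the uniqueness of the positive semi-definite factor in the polar decomposition (Theorem~\ref{thm:PDT}) forces $PA'=|A'|=\mathrm{diag}(|d_1|,\ldots,|d_n|)$. Comparing columns, for every index $i$ with $d_i\neq0$ this gives that column $i$ of $P$ equals $\mathrm{sign}(d_i)\,e_i$; orthonormality of the columns of $P$ then makes row $i$ equal to $\mathrm{sign}(d_i)\,e_i^{\top}$ as well.

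Finally I would exploit that $A$ has zero diagonal. For each $i$ with $d_i\neq0$ the previous step yields $(PB')_{ii}=-(PA)_{ii}=-\mathrm{sign}(d_i)A_{ii}=0$. Since $PB'$ is positive semi-definite with a vanishing diagonal entry, Lemma~\ref{le:Key} forces the whole $i$-th row and column of $PB'$ to vanish, i.e. $\mathrm{sign}(d_i)A_{ij}=0$ for all $j$, so $A_{ij}=0$ for all $j$ and the vertex $v_i$ is isolated. As $G$ is connected (and we may assume $n\ge2$, the case $n=1$ being trivial), it has no isolated vertex, so no index with $d_i\neq0$ can occur. Hence $d_i=0$ for all $i$, i.e. $\omega(v_i)=\overline{\omega}$ for every $i$, which is exactly $\omega$-regularity. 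The key obstacle is thus the combination of polar-decomposition uniqueness (which pins $P$ down to the signs of the $d_i$ on the support of $A'$) with Lemma~\ref{le:Key} and connectedness (to turn a vanishing diagonal entry of $PB'$ into an isolated vertex).
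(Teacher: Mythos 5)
Your proposal is correct and follows essentially the same route as the paper: the decomposition $L_\omega(G)-\overline{\omega}I_n=(D_\omega(G)-\overline{\omega}I_n)+(-A(G))$ with Theorem~\ref{thm:KyFan}, and for the equality case the unitary $P$ from Ky Fan, the uniqueness in Theorem~\ref{thm:PDT} to pin down the relevant columns of $P$, and Lemma~\ref{le:Key} combined with connectedness to reach the contradiction. The only cosmetic difference is that the paper argues at the single index where $\omega$ attains its maximum, while you argue at every index with $d_i\neq 0$; both work.
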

\begin{proof}
We Know that
\begin{linenomath*} \begin{equation}\label{eqn:LomegaFan}
L_\omega(G)-\overline{\omega}I_n= (D_\omega(G)-\overline{\omega}I_n)+(-A(G)).
\end{equation}\end{linenomath*}
Note that $D_\omega(G)-\overline{\omega}I_n$ is a diagonal matrix whose eigenvalues are $\omega(v_i)-\overline{\omega}$, $i=1,\ldots,n$.
It follows from Theorem \ref{thm:KyFan} that
\begin{linenomath*} \begin{equation*}
\sum_{i=1}^ns_i(L_\omega(G)-\overline{\omega}I_n)\leq\sum_{i=1}^ns_i(D_\omega(G)-\overline{\omega}I_n)+
\sum_{i=1}^ns_i(-A(G)).
\end{equation*} \end{linenomath*}
Therefor
\begin{linenomath*} \begin{equation*}
 LE_\omega(G)\leq \sum_{i=1}^n|\omega(v_i)-\overline{\omega}|+\sum_{i=1}^n|\lambda_i(-A(G))|.
 \end{equation*} \end{linenomath*}
 Then, due to the similarity between $A(G)$ and $-A(G)$, we have
\begin{linenomath*} \begin{equation*} LE_\omega(G)\leq n\mathrm{MD}_\omega(G)+E(G).\end{equation*} \end{linenomath*}

Let $G$ be a  $\omega$-regular graph with eigenvalues $\lambda_1,\ldots,\lambda_n$. Then  $\overline{\omega}=\omega(v_i)$ for each $1\leq i\leq n$ and $L_\omega(G)=\overline{\omega}I_n-A(G)$.
It follows that $\overline{\omega}-\lambda_1,\ldots,\overline{\omega}-\lambda_n$ are all the eigenvalues of
$L_\omega(G)$. Therefore, by  Eq. \eqref{eqn:wLE} we have
\begin{linenomath*} \begin{equation*} LE_\omega(G)=E(G).\end{equation*} \end{linenomath*}

 Conversely, suppose that the equality in  Eq. \eqref{eqn:Lap} holds. Without loss of generality, we may assume that
 $\omega(v_1)=\max\{\omega(v_i) \mid 1\leq i\leq n)\}$. Suppose on the contrary that $G$ is not $\omega$-regular. Therefore
\begin{linenomath*} \begin{equation}\label{eqn:Del}
\omega(v_1)>\overline{\omega}.
\end{equation}\end{linenomath*}
Let $ a_i:= \omega(v_i) - \overline{\omega} $ for $i =1,\ldots,n$. We have $a_1 > 0$, via  Eq. \eqref{eqn:Del}.
Due to the equality in  Eq. \eqref{eqn:Lap}, we may apply Theorem \ref{thm:KyFan} to
 Eq. \eqref{eqn:LomegaFan}. Therefore, there exists a unitary matrix $P$ such that $X=P(D_\omega(G)-\overline{\omega}I_n)$ and $Y= P(-A(G))$ are both positive  semi-definite. Hence $P^\ast X$ and $P^\ast Y$ are polar decompositions of the matrices $D_\omega(G)- \overline{\omega}I_n$ and $-A(G)$, respectively. It follows from Theorem \ref{thm:PDT} that $X= |D_\omega(G)- \overline{\omega}I_n|$ and $Y = |A(G)|$. Therefore
 $X =\mathrm{diag}(|a_1|,|a_2|,\ldots,|a_n|)$ . Setting
\begin{linenomath*} \begin{equation*}
P^\ast=
\begin{pmatrix}
q_{11} & \cdots & q_{1n}\\
\vdots   & \ddots &\vdots\\
q_{n1} & \cdots & q_{nn}
\end{pmatrix}
,\qquad
A(G)=
\begin{pmatrix}
0 & a_{12} &  \cdots & a_{1n}\\
a_{12} & 0 & \cdots & a_{2n}\\
\vdots & \ddots & \ddots & \vdots\\
a_{1n} & a_{2n} & \cdots & 0
\end{pmatrix}
,\end{equation*} \end{linenomath*}
\quad

$P^\ast X = D_\omega(G) - \overline{\omega} I_n$, implies

\begin{linenomath*} \begin{equation*}
\begin{pmatrix}
q_{11} & \cdots & q_{1n}\\
\vdots   & \ddots &\vdots\\
q_{n1} & \cdots & q_{nn}
\end{pmatrix}
\begin{pmatrix}
|a_1| & ~ & ~ & ~\\
~ & \ddots & ~ & ~\\
~ & ~ &~ &|a_n|
\end{pmatrix}
=
\begin{pmatrix}
a_1 & ~ & ~ & ~\\
~ & \ddots & ~ & ~\\
~ & ~ &~ & a_n
\end{pmatrix}
.\end{equation*} \end{linenomath*}
Then,
\begin{linenomath*} \begin{equation*}
\begin{pmatrix}
|a_1|q_{11} & |a_2|q_{12} & \cdots & |a_n|q_{1n}\\
|a_1|q_{21} & |a_2|q_{22} & \cdots & |a_n|q_{2n}\\
\vdots   & \vdots & \ddots &\vdots\\
|a_1|q_{n1} & |a_2|q_{n2} & \cdots & |a_1|q_{nn}
\end{pmatrix}
=
\begin{pmatrix}
a_1 & ~ & ~ & ~ & ~\\
~ & a_2 & ~ & ~ & ~ \\
~ & ~& \ddots & ~ & ~\\
~ & ~ &~ & ~ & a_n
\end{pmatrix}
.\end{equation*} \end{linenomath*}
Equality at first column imposes $q_{11}= 1$ and $q_{i1} =0,{i=2,\ldots,n}$. It follows that

\begin{linenomath*} \begin{equation*}P=
\begin{pmatrix}
1 & 0 & \cdots & 0\\
q_{12} & ~ & \cdots & q_{1n}\\
\vdots   & ~ & \ddots &\vdots\\
q_{1n} & ~ & \cdots & q_{nn}
\end{pmatrix}.
\end{equation*} \end{linenomath*}
We must then have
\begin{linenomath*} \begin{equation*}
Y =-
\begin{pmatrix}
1 & 0   & \cdots & 0\\
q_{12} & ~ & \cdots & q_{1n}\\
\vdots   & ~ & \ddots &\vdots\\
q_{1n} & ~ & \cdots & q_{nn}
\end{pmatrix}
\begin{pmatrix}
0 & a_{12} &  \cdots & a_{1n}\\
a_{12} & 0 & \cdots & a_{2n}\\
\vdots & \ddots & \ddots & \vdots\\
a_{1n} & a_{2n} & \cdots & 0
\end{pmatrix}
=-
\begin{pmatrix}
0 & a_{12} &  \cdots & a_{1n}\\
\ast & 0 & \cdots & \ast \\
\vdots & \ddots & \ddots & \vdots\\
\ast & a_{2n} & \cdots & \ast
\end{pmatrix}.
\end{equation*} \end{linenomath*}
The previous matrix is positive semi-definite and by Lemma \ref{le:Key}, we obtain $a_{1j} =0$, $j=2,\ldots,n$. This contradicts our assumption that $G$ is a connected graph and the result follows. $\hfill{\blacksquare}$
\end{proof}

\section{Graphs $G$ for which $LE_\omega(G)=E(G)$}

In Theorem \ref{thm:MD} we showed that if $G$ is a  $\omega$-regular graph, then
\begin{linenomath*} \begin{equation}\label{eqn:Reg}
LE_\omega(G)=E(G).
\end{equation}\end{linenomath*}
In what follows we consider the converse argument.

In the case of vertex degree weight, the first part of the following theorem was proved in \cite{KeyFanLAA2010}, whereas the second part was proved in \cite{Romatch2009}. Based on their proof, we generalize their results for a connected graph with an arbitrary vertex weight.
\begin{theorem}\label{thm:bi-Reg}
Let $G$ be a bipartite graph with a vertex weight $\omega$. Then
\begin{linenomath*} \begin{equation}\label{eqn:Reg-bip}
LE_\omega(G)\ge E(G).
\end{equation}\end{linenomath*}
Moreover, the equality in  Eq. \eqref{eqn:Reg-bip} holds if and only if $G$ is a  $\omega$-regular graph.
\end{theorem}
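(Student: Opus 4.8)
The plan is to apply the Ky Fan theorem to a decomposition of $-A(G)$ that mixes the weighted Laplacian with the weighted signless Laplacian, so that the bipartite similarity of Lemma~\ref{lem:Bi} can be used to make the two resulting energies coincide. The starting point is the identity
\begin{linenomath*}
\begin{equation*}
-A(G)=\frac{1}{2}\bigl(L_\omega(G)-\overline{\omega}I_n\bigr)+\frac{1}{2}\bigl(\overline{\omega}I_n-L^\dag_\omega(G)\bigr),
\end{equation*}
\end{linenomath*}
which holds since $L^\dag_\omega(G)-L_\omega(G)=2A(G)$. Applying Theorem~\ref{thm:KyFan} to this sum (with $C=-A(G)$), and using that the energy scales as $E(cX)=|c|\,E(X)$ and is invariant under $X\mapsto -X$, I obtain
\begin{linenomath*}
\begin{equation*}
E(G)=E(-A(G))\le \frac{1}{2}E\bigl(L_\omega(G)-\overline{\omega}I_n\bigr)+\frac{1}{2}E\bigl(L^\dag_\omega(G)-\overline{\omega}I_n\bigr).
\end{equation*}
\end{linenomath*}

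The first term on the right equals $LE_\omega(G)$ by Eq.~\eqref{eqn:Lapp}. For the second term I would invoke Lemma~\ref{lem:Bi}: since $G$ is bipartite, $L_\omega(G)$ and $L^\dag_\omega(G)$ are similar and hence share the eigenvalues $\mu_1,\dots,\mu_n$; as $L^\dag_\omega(G)$ is Hermitian, $E\bigl(L^\dag_\omega(G)-\overline{\omega}I_n\bigr)=\sum_i|\mu_i-\overline{\omega}|=LE_\omega(G)$ as well. Substituting gives $E(G)\le LE_\omega(G)$, which is Eq.~\eqref{eqn:Reg-bip}.

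For the characterization of equality, one direction is already available: if $G$ is $\omega$-regular then $LE_\omega(G)=E(G)$ by Eq.~\eqref{eqn:Reg}. For the converse I would assume equality and use the equality clause of Theorem~\ref{thm:KyFan} to produce a unitary $P$ with $P\bigl(L_\omega(G)-\overline{\omega}I_n\bigr)$ and $-P\bigl(L^\dag_\omega(G)-\overline{\omega}I_n\bigr)$ both positive semi-definite. Writing $M=L_\omega(G)-\overline{\omega}I_n$ and $N=L^\dag_\omega(G)-\overline{\omega}I_n$, both Hermitian, I would read $M=P^\ast(PM)$ and $-N=P^\ast(-PN)$ as polar decompositions and apply the uniqueness in Theorem~\ref{thm:PDT} to get $PM=|M|$ and $-PN=|N|$. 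Subtracting and using $M-N=-2A(G)$ then yields $-PA(G)=\frac{1}{2}\bigl(|M|+|N|\bigr)$, a positive semi-definite matrix.

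The hard part is to squeeze $\omega$-regularity out of this. Here I would pass to the bipartite block form, with $A(G)=\left(\begin{smallmatrix}0&B\\ B^\ast&0\end{smallmatrix}\right)$ and $D_\omega(G)-\overline{\omega}I_n=\mathrm{diag}(\Delta_1,\Delta_2)$, and use the orthogonal involution $S=\mathrm{diag}(I,-I)$ with $N=SMS$, hence $|N|=S|M|S$. This forces $-PA(G)$ to be block diagonal and $P\bigl(D_\omega(G)-\overline{\omega}I_n\bigr)$ to have vanishing diagonal blocks, so that in block notation $P_{11}B=0$, $P_{11}\Delta_1=0$ and symmetrically $P_{22}B^\ast=0$, $P_{22}\Delta_2=0$. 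From these relations, the positive semi-definiteness of $|M|$, and an argument by contradiction patterned on the proof of Theorem~\ref{thm:MD} — singling out a vertex of maximal weight, using Lemma~\ref{le:Key} to kill the adjacency entries incident to it, and contradicting that it has a neighbour — one should be driven to $D_\omega(G)=\overline{\omega}I_n$, i.e.\ $\omega$-regularity. I expect this equality analysis to be the main obstacle: unlike in Theorem~\ref{thm:MD}, the matrices $M$ and $N$ are no longer diagonal, so $|M|$ and $|N|$ cannot be written out entrywise, and the positivity conclusions must be combined with the bipartite block structure rather than applied directly.
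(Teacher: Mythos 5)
Your proof of the inequality and of the forward direction of the equality is essentially the paper's own argument: you use the same identity $L^\dag_\omega(G)-L_\omega(G)=2A(G)$ (merely rescaled by $-\tfrac12$), the same appeal to Lemma~\ref{lem:Bi} to identify both right-hand energies with $LE_\omega(G)$, and Theorem~\ref{thm:KyFan}. That part is correct.

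The converse direction of the equality characterization, however, contains a genuine gap. You correctly extract a unitary $P$ with $PM=|M|$ and $-PN=|N|$ (where $M=L_\omega(G)-\overline{\omega}I_n$, $N=L^\dag_\omega(G)-\overline{\omega}I_n$), but you never actually derive $\omega$-regularity from this. The block relations you arrive at, $P_{11}\Delta_1=0$, $P_{11}B=0$, $P_{22}\Delta_2=0$, $P_{22}B^\ast=0$, are all satisfied vacuously when $P_{11}=P_{22}=0$ (which is compatible with $P$ being unitary), so by themselves they cannot force $\Delta_1=\Delta_2=0$; and the ``argument patterned on the proof of Theorem~\ref{thm:MD}'' that you invoke to finish relied crucially on one of the two Ky Fan summands being a diagonal matrix, which, as you yourself point out, fails here. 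You explicitly present this final stage as an expectation (``one should be driven to\dots'') rather than a proof, so the characterization of equality is not established. The paper closes this step much more directly: from $X=|L^\dag_\omega(G)-\overline{\omega}I_n|$ and $Y=|-(L_\omega(G)-\overline{\omega}I_n)|$ it argues, using bipartiteness, that $X=Y$, whence Eq.~\eqref{eqn:Ree} forces $L^\dag_\omega(G)-\overline{\omega}I_n=-(L_\omega(G)-\overline{\omega}I_n)$, i.e.\ $2D_\omega(G)=2\overline{\omega}I_n$, which is exactly $\omega$-regularity; no entrywise or block analysis of $P$ is needed. (Your observation $|N|=S|M|S$ is the natural ingredient for justifying the identification of $X$ with $Y$, and pursuing that identity to the conclusion $N=-M$ would be the way to repair your argument.) A further, smaller defect: your planned contradiction ``contradicting that it has a neighbour'' presupposes connectivity, which Theorem~\ref{thm:bi-Reg} does not assume.
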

\begin{proof}
From the definition of weighted  Laplacian matrix and weighted signless   Laplacian matrix, it is clear that
\begin{linenomath*} \begin{equation}\label{eqn:Regg}
\big(L^\dag _\omega(G)- \overline{\omega}I_n\big)-\big(L_\omega(G)-\overline{\omega}I_n\big)=2A(G).
\end{equation}\end{linenomath*}

If $G$ is bipartite, then it follows from Lemma \ref{lem:Bi} that $L_\omega(G)$ and $L^\dag_\omega(G)$ have the same spectra and therefore
\begin{linenomath*} \begin{equation*}
\sum_{i=1}^ns_i(L_\omega^\dag(G)-\overline{\omega}I_n)=\sum_{i=1}^ns_i(L_\omega-\overline{\omega}I_n)=\sum_{i=1}^ns_i(-[L_\omega(G)-\overline{\omega}I_n])=
LE_\omega(G).
\end{equation*} \end{linenomath*}
So by  Theorem \ref{thm:KyFan}, $LE_\omega(G)\ge E(G)$.

Let $G$ be a  $\omega$-regular graph. Then by Theorem \ref{thm:MD},
the equality in  Eq. \eqref{eqn:Reg-bip} holds. Conversely, suppose that the equality in  Eq. \eqref{eqn:Reg-bip} holds.
Therefore,
\begin{linenomath*} \begin{equation*}
 E\Big((L^\dag_\omega(G)- \overline{\omega}I_n)-(L_\omega(G)-\overline{\omega}I_n)\Big)=2 E(G) = E(G)+E (G)= LE_\omega(G)+LE_\omega(G) .
\end{equation*} \end{linenomath*}
Since $G$ is bipartite it follows from Lemma \ref{lem:Bi} that
\begin{linenomath*} \begin{equation}\label{eqn:Re}
 E\Big((L^\dag_\omega(G)-\overline{\omega}I_n)-(L_\omega(G)-\overline{\omega}I_n)\Big)=E\big(L_\omega^\dag(G)-\overline{\omega}I_n\big)+
 E\Big(-\big(L_\omega(G)-\overline{\omega}I_n\big)\Big).
\end{equation}\end{linenomath*}
 Therefore, Theorem \ref{thm:KyFan} asserts that there exists a unitary matrix $P$ , such that
 \begin{linenomath*} \begin{equation}\label{eqn:Ree}
X = P\Big(L^\dag _\omega(G)- \overline{\omega}I_n\Big)  ~~\mbox{and} ~~ Y = P\Big(-\big(L_\omega(G)-\overline{\omega}I_n\big)\Big),
\end{equation}\end{linenomath*}
are both positive semi-definite matrices. Hence $P^\ast X$ and $P^\ast Y$ are polar decompositions of
\begin{linenomath*} \begin{equation*}
L^\dag_\omega(G)- \overline{\omega}I_n ~~ \text{and} ~~ -\big(L_\omega(G)- \overline{\omega}I_n\big),
\end{equation*} \end{linenomath*}
respectively. By Theorem \ref{thm:KyFan} we obtain
\begin{linenomath*} \begin{equation*}
X = |L^\dag_\omega(G)- \overline{\omega}I_n| ~~\text{and}~~ Y = |-\big(L_\omega(G)-\overline{\omega}I_n\big)| .
\end{equation*} \end{linenomath*}
In view of the fact that $G$ is bipartite, we conclude that $X = Y$. Therefore, it follows from  Eq. \eqref{eqn:Ree} that
\begin{linenomath*} \begin{equation*}
L^\dag_\omega(G)+L_\omega(G)=2\overline{\omega}I_n,
\end{equation*} \end{linenomath*}
implying the result. $\hfill{\blacksquare}$
\end{proof}

In the case of vertex degree weight, the  next theorem was proved in \cite{KeyFanLAA2010} and  based on their proof, we get also the following theorem.
\begin{theorem}\label{th:}
Let $G$ be a bipartite graph  with $n$ vertices and  with a vertex weight $\omega$. Then
\begin{linenomath*} \begin{equation}\label{eqn:}
\max\Big\{n\mathrm{MD}_\omega(G), E(G)\Big\}\leq LE_\omega(G)\leq n\mathrm{MD}_\omega(G)+E(G).
\end{equation}\end{linenomath*}
\end{theorem}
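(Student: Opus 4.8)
The plan is to prove Theorem \ref{th:} by combining the two inequalities already established in this section with a basic observation about singular values. The claim has two parts: the upper bound $LE_\omega(G)\leq n\mathrm{MD}_\omega(G)+E(G)$ and the lower bound $\max\{n\mathrm{MD}_\omega(G),E(G)\}\leq LE_\omega(G)$. The upper bound is immediate: Theorem \ref{thm:MD} gives exactly this inequality for any connected graph, and the same Ky Fan argument applied to the decomposition $L_\omega(G)-\overline{\omega}I_n=(D_\omega(G)-\overline{\omega}I_n)+(-A(G))$ goes through verbatim without connectedness, so it holds for our bipartite $G$ as well.

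For the lower bound I would establish the two constituent inequalities $E(G)\leq LE_\omega(G)$ and $n\mathrm{MD}_\omega(G)\leq LE_\omega(G)$ separately, since the maximum of two quantities is bounded by $LE_\omega(G)$ precisely when each of them is. The first, $E(G)\leq LE_\omega(G)$, is exactly the content of Theorem \ref{thm:bi-Reg}, which applies because $G$ is bipartite. For the second, I would return to Eq. \eqref{eqn:LomegaFan}, rewritten as $D_\omega(G)-\overline{\omega}I_n=(L_\omega(G)-\overline{\omega}I_n)+A(G)$, and apply Theorem \ref{thm:KyFan} to obtain
\begin{linenomath*}
\begin{equation*}
E(D_\omega(G)-\overline{\omega}I_n)\leq E(L_\omega(G)-\overline{\omega}I_n)+E(A(G)).
\end{equation*}
\end{linenomath*}
Since $G$ is bipartite, the spectrum of $A(G)$ is symmetric about zero, so one can also split $A(G)$ off with a sign that cancels, but more directly I would use the same bipartite identity from Lemma \ref{lem:Bi} to relate the relevant energies. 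The cleanest route is to write $-A(G)=(L_\omega(G)-\overline{\omega}I_n)-(D_\omega(G)-\overline{\omega}I_n)$ and apply Ky Fan to get $E(A(G))\leq LE_\omega(G)+n\mathrm{MD}_\omega(G)$; this is not quite what is needed, so instead I would apply Ky Fan to $L_\omega(G)-\overline{\omega}I_n=(D_\omega(G)-\overline{\omega}I_n)-A(G)$ to derive $LE_\omega(G)\leq n\mathrm{MD}_\omega(G)+E(G)$ (the upper bound again) and to $D_\omega(G)-\overline{\omega}I_n=(L_\omega(G)-\overline{\omega}I_n)+A(G)$ to derive $n\mathrm{MD}_\omega(G)\leq LE_\omega(G)+E(G)$, which alone is too weak.

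The genuinely useful lower bound $n\mathrm{MD}_\omega(G)\leq LE_\omega(G)$ I would extract from bipartiteness together with Lemma \ref{lem:Bi}, exactly as in the proof of Theorem \ref{thm:bi-Reg}: since $L_\omega(G)$ and $L^\dag_\omega(G)$ are similar, the singular values of $L^\dag_\omega(G)-\overline{\omega}I_n$ coincide with those of $L_\omega(G)-\overline{\omega}I_n$, and $E(L^\dag_\omega(G)-\overline{\omega}I_n)=LE_\omega(G)$. Then applying Ky Fan to the decomposition $D_\omega(G)-\overline{\omega}I_n=\tfrac{1}{2}\bigl[(L^\dag_\omega(G)-\overline{\omega}I_n)+(L_\omega(G)-\overline{\omega}I_n)\bigr]$ yields
\begin{linenomath*}
\begin{equation*}
n\mathrm{MD}_\omega(G)=E(D_\omega(G)-\overline{\omega}I_n)\leq\tfrac{1}{2}E(L^\dag_\omega(G)-\overline{\omega}I_n)+\tfrac{1}{2}E(L_\omega(G)-\overline{\omega}I_n)=LE_\omega(G),
\end{equation*}
\end{linenomath*}
using the averaging identity $D_\omega(G)=\tfrac{1}{2}(L^\dag_\omega(G)+L_\omega(G))$. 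The main obstacle is getting this second lower bound with the right constant: a naive single application of Ky Fan produces an extraneous $E(G)$ term, and the bipartite similarity of $L_\omega(G)$ and $L^\dag_\omega(G)$ is exactly what lets the factor of $\tfrac{1}{2}$ absorb both energies into $LE_\omega(G)$. Once both lower bounds and the upper bound are in hand, the stated chain of inequalities follows by taking the maximum.
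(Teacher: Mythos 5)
Your proof is correct and follows essentially the same route as the paper: the upper bound from Theorem \ref{thm:MD}, the bound $E(G)\leq LE_\omega(G)$ from Theorem \ref{thm:bi-Reg}, and the bound $n\mathrm{MD}_\omega(G)\leq LE_\omega(G)$ by applying the Ky Fan theorem to the identity $L^\dag_\omega(G)+L_\omega(G)=2D_\omega(G)$ together with the bipartite similarity of $L_\omega(G)$ and $L^\dag_\omega(G)$. The exploratory detours in your write-up could be trimmed, but the final argument coincides with the paper's.
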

\begin{proof}
The right side inequality is a direct consequent of Theorem \ref{thm:MD}. Let us prove the left one.
It is easy to see that
  \begin{linenomath*} \begin{equation*}
L^\dag_\omega(G)+L_\omega(G)=2D_\omega(G),
\end{equation*} \end{linenomath*}
from which

\begin{linenomath*} \begin{equation*}\Bigg((L^\dag_\omega(G)- \overline{\omega}I_n\Bigg)+\Bigg(L_\omega(G)-\overline{\omega}I_n)\Bigg)=2\Bigg(D_\omega(G)-\overline{\omega}I_n\Bigg).\end{equation*} \end{linenomath*}
It follows from Theorem \ref{thm:KyFan} that

\begin{linenomath*} \begin{equation*}E\Bigg((L^\dag_\omega(G)-\overline{\omega}I_n\Bigg)+E\Bigg(L_\omega(G)-\overline{\omega}I_n)\Bigg)\ge
2E\Bigg(D_\omega(G)-\overline{\omega}I_n\Bigg)=2n\mathrm{MD}_\omega(G).\end{equation*} \end{linenomath*}
In the other hand, since $G$ is bipartite, it follows from Lemma \ref{lem:Bi} that
\begin{linenomath*} \begin{equation*}LE_\omega(G)=E\Bigg((L^\dag_\omega(G)-\overline{\omega}I_n\Bigg)=E\Bigg(L_\omega(G)-\overline{\omega}I_n)\Bigg).\end{equation*} \end{linenomath*}
Therefore
\begin{linenomath*} \begin{equation}\label{MD-LE}
  LE_\omega(G)\ge n\mathrm{MD}_\omega(G).
\end{equation}\end{linenomath*}
Hence, the result follow from  Eq. \eqref{MD-LE} and  Theorem \ref{thm:bi-Reg}. $\hfill{\blacksquare}$
\end{proof}

\section{An upper bound on the Laplacian matrix energy for the disjoint union of graphs}

Here and throughout this section, $\bigoplus$ denotes the block matrix direct sum \cite{Horn-Joh}.

 Let $k\in\mathbb{N}$. Suppose that for each  $1\le i \leq k$, $G_i=(V_i,E_i)$ is an $(n_i,m_i)$-graph with the vertex set $V_i$  and the edge set $E_i$. Let $V_i$'s are mutually disjoint. In this case the \emph{disjoint union} of $G_i$'s, denoted by $\bigcup_{i=1}^{k}G_i$, is a non-connected graph with the vertex set $\bigcup_{i=1}^{k}V_i$ and the
 edge set $\bigcup_{i=1}^{k}E_i$. It is easy to see that
 \begin{linenomath*} \begin{equation}\label{eqn}
   A(\bigcup_{i=1}^{k}G_i)=\bigoplus_{i=1}^k A(G_i).
 \end{equation}\end{linenomath*}
Moreover, if $\omega_i$  is a vertex weight, assigned to $G_i$, then $\bigcup_{i=1}^{k}G_i$ inherits naturally a vertex degree weight from its components. This weight is nothing but $\omega:=\bigcup_{i=1}^{k}\omega_i$, i.e., For each $v\in \bigcup_{i=1}^{k}V_i$, $\omega(v)=\omega_i(v)$ if and only if $v\in V_i$.
Note that $\overline{\omega}$ is a convex combination of $\overline{\omega}_i$, $i=1,\ldots,k$, since
 \begin{linenomath*} \begin{equation}\label{eqn:L15}
  \overline{\omega}=\Big(\dfrac{1}{\sum_{j=1}^kn_j}\Big)\Big(\sum_{i=1}^k \sum_{v\in V_i}\omega_i(v)  \Big)= \sum_{i=1}^k\Big(\dfrac{n_i}{\sum_{j=1}^kn_j}\Big)\overline{\omega}_i.
  \end{equation}\end{linenomath*}
Moreover
\begin{linenomath*}
\begin{equation*}
\overline{\omega} \ge \overline{\omega_i}, \quad i=1,\ldots,k.
\end{equation*}
\end{linenomath*}
In the case of vertex degree weight, the  next theorem was proved in \cite{Romatch2009} and  based on their proof, we get also the following result.
\begin{theorem}\label{the:th8}
 Let $k\in\mathbb{N}$. Suppose that for each  $1\le i \leq k$, $G_i$ is a graph
 with $n_i$ vertices and with a vertex weight $\omega_i$. Then
 \begin{linenomath*} \begin{equation}\label{eqn:L14}
  LE_\omega(\bigcup_{i=1}^{k}G_i) \leq \sum_{i=1}^kLE_{\omega_i}(G_i)+\sum_{i=1}^k\Big|\overline{\omega}_i-\overline{\omega}\Big|n_i.
\end{equation}\end{linenomath*}
 Equality holds if and only if $\overline{\omega}_i=\overline{\omega}$ for all $i =1,\ldots,k$.
 \end{theorem}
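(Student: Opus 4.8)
The plan is to realise $LE_\omega\big(\bigcup_{i=1}^k G_i\big)$ as the energy of a single block-diagonal matrix and then to split that matrix into two summands to which Theorem~\ref{thm:KyFan} applies. First I would record, using Eq.~\eqref{eqn}, that $L_\omega\big(\bigcup_{i=1}^k G_i\big)=\bigoplus_{i=1}^k L_{\omega_i}(G_i)$, since both $A$ and $D_\omega$ respect the block structure of the disjoint union. Writing $N=\sum_{j=1}^k n_j$ and using Eq.~\eqref{eqn:Lapp}, this gives
\[
L_\omega\Big(\bigcup_{i=1}^k G_i\Big)-\overline{\omega}I_N=\bigoplus_{i=1}^k\big(L_{\omega_i}(G_i)-\overline{\omega}I_{n_i}\big).
\]
The decisive step is to decompose each block by inserting $\overline{\omega}_i$: for every $i$,
\[
L_{\omega_i}(G_i)-\overline{\omega}I_{n_i}=\big(L_{\omega_i}(G_i)-\overline{\omega}_iI_{n_i}\big)+\big(\overline{\omega}_i-\overline{\omega}\big)I_{n_i}.
\]
Collecting the first summands into $X:=\bigoplus_i\big(L_{\omega_i}(G_i)-\overline{\omega}_iI_{n_i}\big)$ and the second into the diagonal matrix $Y:=\bigoplus_i(\overline{\omega}_i-\overline{\omega})I_{n_i}$, I obtain $L_\omega\big(\bigcup_iG_i\big)-\overline{\omega}I_N=X+Y$.

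Next I would apply Theorem~\ref{thm:KyFan} to $X+Y$, which yields $E(X+Y)\le E(X)+E(Y)$. To identify the three energies I use that the singular values of a block-diagonal matrix are the union (with multiplicities) of the singular values of its blocks, so $E$ is additive over $\bigoplus$. Hence $E(X+Y)=LE_\omega\big(\bigcup_iG_i\big)$ by Eq.~\eqref{eqn:Lapp}, while $E(X)=\sum_i E\big(L_{\omega_i}(G_i)-\overline{\omega}_iI_{n_i}\big)=\sum_i LE_{\omega_i}(G_i)$ by Eq.~\eqref{eqn:wLE}. Finally each block of $Y$ is a scalar multiple of the identity, so its singular values are $|\overline{\omega}_i-\overline{\omega}|$ with multiplicity $n_i$ and $E(Y)=\sum_i|\overline{\omega}_i-\overline{\omega}|\,n_i$. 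Substituting these three evaluations into the Ky Fan inequality gives Eq.~\eqref{eqn:L14}.

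For the equality the easy ``if'' direction is that $\overline{\omega}_i=\overline{\omega}$ for all $i$ forces $Y=0$, whence $E(X+Y)=E(X)=E(X)+E(Y)$ trivially. For the converse I would imitate the equality analysis of Theorem~\ref{thm:MD}: equality in Theorem~\ref{thm:KyFan} produces a unitary $P$ with $PX$ and $PY$ both positive semi-definite, so by the polar decomposition theorem (Theorem~\ref{thm:PDT}) one has $PX=|X|$ and $PY=|Y|$. Since $Y$ is diagonal with real entries $\overline{\omega}_i-\overline{\omega}$, the identity $PY=|Y|$ reads column by column as $(\overline{\omega}_i-\overline{\omega})P_{:,j}=|\overline{\omega}_i-\overline{\omega}|\,e_j$; thus on every coordinate $j$ belonging to a block with $\overline{\omega}_i\ne\overline{\omega}$ the corresponding column of $P$ is forced to be $\pm e_j$, and unitarity then makes $P$ act as $\pm I_{n_i}$ on each such block.

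The main obstacle is to turn this into a contradiction on any block with $\overline{\omega}_i\ne\overline{\omega}$. On such a block the relation $PX=|X|$ specialises to $\pm\big(L_{\omega_i}(G_i)-\overline{\omega}_iI_{n_i}\big)=\big|L_{\omega_i}(G_i)-\overline{\omega}_iI_{n_i}\big|$, which makes $X_i:=L_{\omega_i}(G_i)-\overline{\omega}_iI_{n_i}$ semi-definite. But $\mathrm{tr}(X_i)=\sum_v\big(\omega_i(v)-\overline{\omega}_i\big)=0$, so a semi-definite $X_i$ must vanish, i.e.\ $A(G_i)=D_{\omega_i}(G_i)-\overline{\omega}_iI_{n_i}$ would be diagonal; reading the off-diagonal entries (or invoking Lemma~\ref{le:Key} exactly as in Theorem~\ref{thm:MD}) contradicts the presence of an edge in $G_i$. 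Hence no block can have $\overline{\omega}_i\ne\overline{\omega}$, which proves $\overline{\omega}_i=\overline{\omega}$ for all $i$ and completes the equality discussion. I expect the sign-bookkeeping in this last paragraph, rather than the inequality itself, to be where the real work lies.
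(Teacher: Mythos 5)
Your proof is correct and follows essentially the same route as the paper: the identical block decomposition $X+Y$ of $L_\omega\big(\bigcup_i G_i\big)-\overline{\omega}I$, the Ky Fan inequality for the bound, and the polar-decomposition/unitary-$P$ analysis of the equality case. The only cosmetic difference is that you eliminate a block with $\overline{\omega}_i\neq\overline{\omega}$ via the trace argument (a semi-definite $X_i$ with zero trace must vanish), whereas the paper invokes the Rayleigh principle to produce a negative eigenvalue of $L_{\omega_1}(G_1)-\overline{\omega}_1I_1$; both versions tacitly assume such a component is not an edgeless $\omega_i$-regular graph.
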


\begin{proof}
 In order to simplify the writing and omit some subscripts,  for each  $1\le i \leq k$, we denote $I_{n_i}$ and $\overline{\omega}_i-\overline{\omega}$
 by $I_i$ and $b_i$, respectively. It is clear that
\begin{linenomath*} \begin{equation}\label{eqn:bsummat}
  L_\omega(G)-\overline{\omega}I_n= \bigoplus_{i=1}^k\big(L_{\omega_i}(G_i)-\overline{\omega}I_i\big)= \bigoplus_{i=1}^k\big(L_{\omega_i}(G_i)-\overline{\omega}_iI_i\big)+\bigoplus_{i=1}^kb_iI_i
\end{equation}\end{linenomath*}
Therefore, as a consequence of  Eq. \eqref{eqn:Lapp} and Theorem \ref{thm:KyFan}, the inequality in  Eq. \eqref{eqn:L14} follows.

Now let us consider the the equality case in  Eq. \eqref{eqn:L14}.
Let $\overline{\omega}_i=\overline{\omega}$ for all $i =1,\ldots,k$. Therefore the matrix $\displaystyle\bigoplus_{i=1}^kb_iI_i$ is zero and consequently it follows from  Eq. \eqref{eqn:bsummat} that the equality in  Eq. \eqref{eqn:L14} holds.

Conversely suppose on the contrary that
there exists $1\leq l\leq k$ such that
$\overline{\omega}_l > \overline{\omega}$. We may assume that $l= 1$. As a consequence of Theorem \ref{thm:KyFan},  Eq. \eqref{eqn:bsummat} and the equality in  Eq. \eqref{eqn:L14}, there exists a unitary matrix $P$ such that
\begin{linenomath*} \begin{equation*}
X=P\bigoplus_{i=1}^k\big(L_{\omega_i}(G_i)-\overline{\omega}_iI_i\big)
~~~\mbox{and}~~~
Y= P\bigoplus_{i=1}^kb_iI_i,
\end{equation*} \end{linenomath*}
are both positive semi-definite. Hence $P^\ast X$ and $P^\ast Y$ are polar decompositions of the matrices
\begin{linenomath*} \begin{equation*}
\bigoplus_{i=1}^k\big(L_{\omega_i}(G_i)-\overline{\omega}_iI_i\big)
~~~\mbox{and}~~~
\bigoplus_{i=1}^kb_iI_i,
\end{equation*} \end{linenomath*}
respectively. By Theorem \ref{thm:PDT}, we arrive at
\begin{linenomath*} \begin{equation}\label{eqn:L16}
Y=\bigoplus_{i=1}^k|b_i|I_i= P\bigoplus_{i=1}^kb_iI_i
\end{equation}\end{linenomath*}
We can write the unitary matrix $P$ as
\begin{linenomath*} \begin{equation}\label{eqn:L17}
P=
\begin{pmatrix}
P_{11} & P_{12} & \cdots & P_{1k}\\
P_{21} & P_{22} & \cdots & P_{2k}\\
\vdots   & ~         & \ddots & \vdots \\
P_{k1} & \cdots    & ~       & P_{kk}
\end{pmatrix},
\end{equation}\end{linenomath*}
with the diagonal matrices $P_{jj}$,$j=1,\ldots,k$ of order $n_j$, respectively. From  Eq. \eqref{eqn:L16} we have
\begin{linenomath*} \begin{equation*}
\begin{pmatrix}
|b_1|I_1 & 0 & \cdots & 0\\
0 & |b_2|I_2 & \cdots & 0\\
\vdots   & ~         & \ddots & \vdots \\
0 & \cdots    & 0       & |b_k|I_k
\end{pmatrix}=
\begin{pmatrix}
P_{11} & P_{12} & \cdots & P_{1k}\\
P_{21} & P_{22} & \cdots & P_{2k}\\
\vdots   & ~         & \ddots & \vdots \\
P_{k1} & \cdots    & ~       & P_{kk}
\end{pmatrix}
\begin{pmatrix}
b_1I_1 & 0 & \cdots & 0\\
0 & b_2I_2 & \cdots & 0\\
\vdots   & ~         & \ddots & \vdots \\
0 & \cdots    & 0      & b_kI_k
\end{pmatrix}
,\end{equation*} \end{linenomath*}
and then
\begin{linenomath*} \begin{equation}\label{eqn:L18}
\begin{pmatrix}
|b_1|I_1 & 0 & \cdots & 0\\
0 & |b_2|I_2 & \cdots & 0\\
\vdots   & ~         & \ddots & \vdots \\
0 & \cdots    & 0       & |b_k|I_k
\end{pmatrix}=
\begin{pmatrix}
b_1P_{11} & P_{12} & \cdots & P_{1k}\\
b_1P_{21} & P_{22} & \cdots & P_{2k}\\
\vdots   & ~         & \ddots & \vdots \\
b_1P_{k1} & \cdots    & ~       & P_{kk}
\end{pmatrix}.
\end{equation}\end{linenomath*}
As $b_1 =\overline{\omega}_1-\overline{\omega}>0$, via  Eq. \eqref{eqn:L18} we obtain $P_{11}= I_1$ and $P_{j1}= 0$, $j =2,\ldots,k$. Now it follows from $X=P\bigoplus_{i=1}^k\big(L_{\omega_i}(G_i)-\overline{\omega}_iI_i\big)$ that $L_{\omega_1}(G_1)-\overline{\omega}_1I_1$ is positive semi-definite. Now we have the required contradiction, since by the Rayleigh principle we find that
$L_{\omega_1}(G_1)-\overline{\omega}_1I_1$ has a negative eigenvalue. Hence the assertion follows.
\end{proof}

%
%
%
%
%
%
%
%
%
%
%

\end{document}